\theoremstyle{plain}
\newtheorem{theorem}{Theorem}
\theoremstyle{definition}
\newtheorem{remark}[theorem]{Remark}
\def\st#1#2{\begin{bmatrix}
#1 \\ #2 \end{bmatrix}}
\def\sts#1#2{\begin{Bmatrix}
#1 \\ #2 \end{Bmatrix}}
\newcommand{\N}{{\mathbb N}}
\newcommand{\Co}{{\mathbb C}}
\newcommand{\R}{{\mathbb R}}
\newcommand{\E}{{\mathbb E}}
\newcommand{\B}{{\mathcal B}}
\newcommand{\A}{{\mathcal A}}
\title
{$B$-Stirling numbers associated to potential polynomials}
\date{\today}
\subjclass[2020]{05A18, 05A19, 60E05.}
\keywords{$B$-Stirling numbers, potential polynomials, Bell polynomials, degenerate Stirling numbers, probabilistic Stirling numbers, $S$-restricted Stirling numbers, Sheffer sequences}
\author{Jos\'{e} A.~Adell}
\address{Departamento de M\'{e}todos Estad\'{i}sticos, Facultad de Ciencias, Universidad de Zaragoza, Spain}
\email{adell@unizar.es}
\author{Be\'ata B\'enyi}
\address{Be\'{a}ta B\'enyi, Faculty of Water Sciences, University of Public Service, Baja, Hungary}
\email{benyi.beata@uni-nke.hu}
\date{\today}
\begin{document}

\begin{abstract}
	We introduce the $B$-Stirling numbers of the first and second kind, which are the coefficients of the potential polynomials when we express them in terms of the monomials and the falling factorials, respectively. These numbers include, as particular cases, the partial and complete Bell polynomials, the degenerate and probabilistic Stirling numbers, and the $S$-restricted Stirling numbers, among others. Special attention is devoted to the computation of such numbers. On the one hand, a recursive formula is provided. On the other, we can compute Stirling numbers of one kind in terms of the other, with the help of the classical Stirling numbers. 
 	
\end{abstract}
	\maketitle

%%%%%%%%%%%%%%%%%%%%%%%

\section{Introduction}

%%%%%%%%%%%%%%%%%%%%%%

Let $\N$ be the set of positive integers and $\N_0=\N\cup\{0\}$. Throughout this paper, we assume that $j,k,m,n\in \N_0$, $x\in \R$, and $z\in \Co$ with $|z|\leq r$, for some  $r>0$, where $r$ may change from line to line.

Let $\A$ be the set of functions $A(z)$ which are analytic at $z=0$ and let $\B\subseteq \A$ be the set of functions $B(z)$ such that $B(0)=1$. If $A(z)\in \A$ and $B(z)\in \B$, we consider the sequence of polynomials $(P_n(x))_{n\geq 0}$ whose generating function is given by

\begin{align}\label{1}
	\sum_{n=0}^{\infty} {P}_n(x) \frac{z^n}{n!} = A(z)\left(B(z)\right)^x. 
\end{align}

If $A(0)\not=0$ and 
\begin{align*}
	B(z) = \exp \left(\sum_{i=1}^{\infty} H_i\frac{z^i}{i!}\right),\qquad H_1\not=0,
\end{align*}
then $(P_n(x))_{n\geq 0}$  is called a \emph{Sheffer polynomial sequence} (see Sheffer\cite{Sheffer1939}, Roman\cite{Roman1984}, Wang and Wang \cite{WangWang2009}, and Marcell\'{a}n et al. \cite{Marcellan2019} for equivalent definitions).

Given $B(z)\in \B$, the sequence $(P_n(B;x))_{n\geq 0}$ defined by 
\begin{align}\label{2}
	\sum_{n=0}^{\infty} P_n(B;x)\frac{z^n}{n!} =\left(B(z)\right)^{x},
\end{align}
is known in the literature as the sequence of \emph{potential polynomials} (see, for instance, Comtet \cite[Sec.3.5]{Comtet1974}, Howard \cite{Howard1982}, Cenkci \cite{Cenkci2009}, and Wang \cite{Wang2014}). If 
\begin{align*}
	A(z) = \sum_{n=0}^{\infty} A_n\frac{z^n}{n!},
\end{align*}
then we have from \eqref{1}, \eqref{2}, and \eqref{11} and \eqref{12} below 

\begin{align*}
	{P}_n(x) = \sum_{k=0}^{n}\binom{n}{k} A_kP_{n-k}(B;x).
\end{align*}
For this reason, we will restrict our attention to potential polynomials. To describe such polynomials, we introduce the sequences $(s_{B}(n,k))_{n\geq k}$ and $(S_{B}(n,k))_{n\geq k}$ respectively defined as
\begin{align}\label{3}
	\frac{(\log B(z))^k}{k!} = \sum_{n=k}^{\infty} s_B(n,k)\frac{z^n}{n!},
\end{align}
and 
\begin{align}\label{4}
	\frac{(B(z)-1)^k}{k!}= \sum_{n=k}^{\infty} S_B(n,k)\frac{z^n}{n!}.
\end{align}
We call such numbers the \emph{Stirling numbers of the first and second kind associated to $B(z)$}, respectively, or for short, the \emph{$B$-Stirling numbers.}

In first place, note that $B$-Stirling numbers extend the classical Stirling numbers of the first and second kind, respectively denoted by $s(n,k)$ and $S(n,k)$, in the following sense. Suppose that 
\begin{align}\label{5}
	I(z) = 1+z.
\end{align}
Then, we have from \eqref{3} and \eqref{4}
\begin{align}\label{6}
	s_I(n,k) = s(n,k) \qquad\mbox{and} \qquad S_I(n,k) = \delta_{n,k}. 
\end{align}
In this case, the potential polynomials are the falling factorials, that is,
\begin{align}\label{7}
	P_n(I;x) = (x)_n:= x(x-1)\cdots(x-n+1),\qquad ((x)_0=1).
\end{align}
Similarly, for \begin{align}\label{8}
	E(z) = e^z,
\end{align}
we have 
\begin{align}\label{9}
	s_E(n,k)=\delta_{n,k} \qquad\mbox{and}\qquad S_E(n,k) = S(n,k),
\end{align}
as well as 
\begin{align}\label{10}
	P_n(E;x) = x^n.
	\end{align}

In second place, it will be shown in \cref{section_4} that many sequences of known numbers are obtained as particular cases of $B$-Stirling numbers for an appropriate choice of the function $B(z)\in \B$. For instance, partial and complete Bell polynomials (cf. Comtet \cite{Comtet1974}, Yang \cite{Yang2008}, and Wang and Wang \cite{WangWang2009}), the degenerate Stirling numbers of both kinds introduced by Kim and Kim \cite{KimKim2020, KimKim2023}, the probabilistic Stirling numbers of both kinds introduced by the authors \cite{AdellLekuona2019, Adell2022, AdellBenyi2024}, and the $S$-restricted Stirling numbers considered by the second author \cite{BenyiRamirez2019,BenyiRamirezMendez2020,BenyiRamirezMendezWakhare2019}.

The paper is organized as follows. In \cref{section_2}, we show that potential polynomials are closely connected with $B$-Stirling numbers. Indeed, the $B$-Stirling numbers of the first kind (resp. second kind) are the coefficients of the potential polynomials when expressed in terms of the monomials $x^k$ (resp. the falling factorials $(x)_k$). As a consequence, the $B$-Stirling numbers of the first kind (resp. second kind) are obtained by differentiation (resp. computation of forward differences) at the origin of the potential polynomials. A recursive formula to compute both kinds of such numbers is also provided. 

In \cref{section_3}, we introduce two inner operations in the set $\B$ and describe the behaviour of the corresponding $B$-Stirling numbers and potential polynomials under these two operations. One important consequence, from a computational point of view, is the following. In dealing with particular examples, one kind of the $B$-Stirling numbers $s_B(n,k)$ or $S_B(n,k)$ is easier to compute than the other. Under such circumstances, we directly compute the easiest one, say $s_B(n,k)$, and then compute the other, say $S_B(n,k)$, in terms of $s_B(n,k)$ and the classical Stirling numbers.

%%%%%%%%%%%%%%%%%%%%%%%%%%%%%%%%%%%%%%%%%%%%%%%%

\section{$B$-Stirling numbers}\label{section_2}

%%%%%%%%%%%%%%%%%%%%%%%%%%%%%%%%%%%%%%%%%%%%%%%%

The $B$-Stirling numbers allow us to describe the potential polynomials (and also the Sheffer polynomials) in closed form, as shown in the following result. 

\begin{theorem}\label{theo_1}
	Let $B(z)\in \B$. Then,
	\begin{align*}
		P_n(B;x) = \sum_{k=0}^{n}s_B(n,k)x^k = \sum_{k=0}^{n} S_B(n,k)(x)_k.
	\end{align*}
\end{theorem}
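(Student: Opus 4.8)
The plan is to prove both identities by extracting coefficients from suitable generating-function identities, so that each claimed formula becomes a statement about equating the coefficients of $z^n/n!$ on two sides. Starting from the defining relation \eqref{2}, namely $\sum_{n\geq 0} P_n(B;x) z^n/n! = (B(z))^x$, I would write $(B(z))^x$ in two different ways: once using the logarithm and once using the expansion in powers of $B(z)-1$.

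For the first identity, I would use $(B(z))^x = \exp\bigl(x \log B(z)\bigr) = \sum_{k=0}^{\infty} x^k (\log B(z))^k / k!$. Substituting the definition \eqref{3} of $s_B(n,k)$, the inner factor $(\log B(z))^k/k!$ equals $\sum_{n\geq k} s_B(n,k) z^n/n!$, so after swapping the order of summation the right-hand side becomes $\sum_{n\geq 0} \bigl(\sum_{k=0}^n s_B(n,k) x^k\bigr) z^n/n!$. Comparing the coefficient of $z^n/n!$ with the left-hand side of \eqref{2} yields $P_n(B;x) = \sum_{k=0}^n s_B(n,k) x^k$, which is the first equality.

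For the second identity, I would instead write $(B(z))^x = (1 + (B(z)-1))^x = \sum_{k=0}^{\infty} \binom{x}{k} (B(z)-1)^k$ via the binomial series, valid for $|z|$ small since $B(0)=1$ makes $B(z)-1$ small. Since $\binom{x}{k} k! = (x)_k$, this is $\sum_{k=0}^{\infty} (x)_k \, (B(z)-1)^k/k!$, and substituting \eqref{4} gives $\sum_{n\geq 0}\bigl(\sum_{k=0}^n S_B(n,k)(x)_k\bigr) z^n/n!$ after interchanging sums. Matching coefficients of $z^n/n!$ against \eqref{2} produces the second equality. Both computations rely only on routine rearrangement once the correct expansions of $(B(z))^x$ are identified.

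The main obstacle, which is really a matter of care rather than depth, is justifying the interchange of the two infinite summations and the convergence of the binomial (and exponential) series for $(B(z))^x$. Since $B(z)$ is analytic at $z=0$ with $B(0)=1$, both $\log B(z)$ and $B(z)-1$ are analytic and vanish at $z=0$, so for $|z|\leq r$ with $r$ small enough all the series converge absolutely and uniformly, which legitimizes the rearrangements; I would note this briefly. I would also remark that the two expansions of the single analytic function $(B(z))^x$ must agree coefficient-by-coefficient, which is exactly what equates the two closed forms for $P_n(B;x)$ and simultaneously reproves the classical identity $x^k = \sum_k S(k,\cdot)(x)_\cdot$ type relations in the special cases \eqref{6} and \eqref{9}.
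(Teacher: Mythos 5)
Your proof is correct and follows essentially the same route as the paper: expanding $(B(z))^x$ as $\exp(x\log B(z))$ for the first identity and as the binomial series in $B(z)-1$ for the second, then substituting the defining generating functions \eqref{3} and \eqref{4} and comparing coefficients of $z^n/n!$. Your additional remarks on convergence and the interchange of summations only make explicit what the paper leaves implicit.
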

\begin{proof}
	By \eqref{2} and \eqref{3} we have 
	\begin{align*}
		(B(z))^x&= \exp{(x\log B(z))}= \sum_{k=0}^{\infty}x^k\frac{(\log B(z))^k}{k!}\\
		&= \sum_{k=0}^{\infty}x^k\sum_{n=k}^{\infty} s_B(n,k)\frac{z^n}{n!} =\sum_{n=0}^{\infty}\frac{z^n}{n!} \sum_{k=0}^{n}s_B(n,k)x^k,
	\end{align*}
which shows the first identity. On the other hand, suppose that $|B(z)-1|<1$. By \eqref{2}, \eqref{4}, and the binomial expansion, we obtain
\begin{align*}
	(B(z)-1+1)^x &= \sum_{k=0}^{\infty} (x)_k \frac{(B(z)-1)^k}{k!} = \sum_{k=0}^{\infty} (x)_k\sum_{n=k}^{\infty} S_B(n,k)\frac{z^n}{n!}\\
	&= \sum_{n=0}^{\infty} \frac{z^n}{n!} \sum_{k=0}^{n} S_B(n,k)(x)_k,
\end{align*}
thus showing the second identity and completing the proof. 
\end{proof}
\begin{remark}\label{rem_2}
	Applying \cref{theo_1} to $I(z)=1+z$, we get from \eqref{6} and \eqref{7} the well known identity 
	\begin{align*}
		P_n(I;x)= (x)_n=\sum_{k=0}^{n} s(n,k)x^k.
	\end{align*}
Similarly, for $E(z) = e^z$, we have from \eqref{9} and \eqref{10}
\begin{align*}
	P_n(E;x) = x^n = \sum_{k=0}^{n} S(n,k) (x)_k.
\end{align*}
\end{remark}
Let $\mathcal{H}$ be the set of complex sequences $\mathbb{U}=(U_n)_{n\geq 0}$ having a finite generating function 
\begin{align*}
	\sum_{n=0}^{\infty} U_n\frac{z^n}{n!}.
\end{align*}
If $\mathbb{U},\mathbb{V}\in\mathcal{H}$, we define their binomial convolution $\mathbb{U}\times\mathbb{V}=((U\times V)_n)_{n\geq 0}$ as
\begin{align}\label{11}
	(U\times V)_n = \sum_{j=0}^{n}\binom{n}{j}U_jV_{n-j}.
\end{align}
It is well known (see, for instance, \cite{AdellLekuona2017}) that 
\begin{align}\label{12}
	\sum_{n=0}^{\infty}(U\times V)_n \frac{z^n}{n!} = \sum_{n=0}^{\infty} U_n\frac{z^n}{n!}\sum_{n=0}^{\infty} V_n\frac{z^n}{n!}.
\end{align}

The following result shows that $B$-Stirling numbers can be recursively computed. 

\begin{theorem}\label{theo_3}
	Let $B(z)\in \B$. For any $n\geq k\geq 1$, we have
	\begin{align}\label{15*}
		s_B(n,k) = \frac{1}{k} \sum_{j=k-1}^{n-1} s_B(j,k-1)s_B(n-j,1),
	\end{align}
and 
\begin{align}\label{16*}
	S_B(n,k) = \frac{1}{k} \sum_{j=k-1}^{n-1} \binom{n}{j}S_B(j,k-1)S_B(n-j,1).
\end{align}
In particular, 
\begin{align}\label{17*}
	s_B(k,k) =[s_B(1,1)]^k\qquad\mbox{and}\qquad S_B(k,k)= \left[S_B(1,1)\right]^k.
\end{align}
\end{theorem}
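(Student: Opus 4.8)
The plan is to read both recurrences off the defining generating functions \eqref{3} and \eqref{4} by exploiting the factorization of a $k$-th power into a $(k-1)$-st power times a first power, and then converting the resulting product of exponential generating functions into a binomial convolution via \eqref{11}--\eqref{12}.

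First I would treat the first kind. Writing $L(z)=\log B(z)$, the identity
\[
\frac{L(z)^k}{k!}=\frac1k\,\frac{L(z)^{k-1}}{(k-1)!}\,\frac{L(z)^1}{1!}
\]
exhibits the left-hand side, whose coefficients are the $s_B(n,k)$, as a product of the generating functions of $(s_B(n,k-1))_n$ and $(s_B(n,1))_n$. Since $L(z)$ is analytic at $0$ with $L(0)=0$, both factors are genuine power series on some disc $|z|\le r$, so \eqref{12} applies and the coefficient of $z^n/n!$ in the product is the binomial convolution of the two coefficient sequences. Reading it off, and using that $s_B(\cdot,k-1)$ is supported on indices $\ge k-1$ while $s_B(\cdot,1)$ is supported on indices $\ge1$, produces \eqref{15*}, the summation range $k-1\le j\le n-1$ coming precisely from these two supports. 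The second kind is identical after replacing $L(z)$ by $B(z)-1$: the same factorization, now read through \eqref{4}, identifies the factors as the generating functions of $(S_B(n,k-1))_n$ and $(S_B(n,1))_n$, and \eqref{12} delivers \eqref{16*}.

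To get \eqref{17*} I would set $n=k$ in each recurrence. The range $k-1\le j\le n-1$ then collapses to the single index $j=k-1$, the accompanying factor simplifying to $\tfrac1k\binom{k}{k-1}=1$, so that $s_B(k,k)=s_B(k-1,k-1)\,s_B(1,1)$ and likewise for $S_B$; a one-line induction on $k$ then yields $s_B(k,k)=[s_B(1,1)]^k$ and $S_B(k,k)=[S_B(1,1)]^k$. I do not anticipate a real obstacle here: the argument is essentially bookkeeping, the only points needing care being the justification of \eqref{12} on $|z|\le r$ (immediate from the analyticity of $B$ and the vanishing of $\log B(z)$ and $B(z)-1$ at the origin) and the correct reading of the summation limits from the supports of the two factor sequences. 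As a consistency check before writing up, I would specialize to $B=I$ and $B=E$, where \eqref{6} and \eqref{9} make both sides explicit and reproduce the classical facts $s(k,k)=S(k,k)=1$.
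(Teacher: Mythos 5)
Your argument is the same as the paper's: factor the $k$-th power as a $(k-1)$-st power times a first power, read off the coefficients as a binomial convolution via \eqref{11}--\eqref{12} using the supports \eqref{18*}, and then induct on $k$ for \eqref{17*}. One remark worth recording: the convolution actually yields $s_B(n,k)=\frac{1}{k}\sum_{j=k-1}^{n-1}\binom{n}{j}s_B(j,k-1)s_B(n-j,1)$, so \eqref{15*} as printed is missing the factor $\binom{n}{j}$ (your own consistency check with $B=E$ exposes this, since the printed formula gives $s_E(k,k)=1/k$ rather than $1$); your use of $\frac{1}{k}\binom{k}{k-1}=1$ when deriving \eqref{17*} shows you are, correctly, working with the version that includes the binomial coefficient.
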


\begin{proof}
	Starting form the identity 
	\begin{align*}
		\frac{(\log B(z))^k}{k!}= \frac{1}{k}\frac{(\log B(z))^{k-1}}{(k-1)!}\log B(z),
	\end{align*}
formula \eqref{15*} follows from  \eqref{3}, \eqref{11}, and \eqref{12}, since

\begin{align}\label{18*}
	s_B(n,k) = S_B(n,k)=0,\quad n<k.
\end{align}
The proof of \eqref{16*} is similar starting from the identity
\begin{align*}
	\frac{(B(z)-1)^k}{k!} = \frac{1}{k}\frac{(B(z)-1)^{k-1}}{(k-1)!}(B(z)-1).
\end{align*}
Finally, we have from \eqref{15*}
\begin{align*}
	s_B(k,k) = s_B(k-1,k-1)s_B(1,1).
\end{align*}
Thus, the first equality in \eqref{17*} follows by induction on $k$. The second one is shown in a similar way.  
\end{proof}

Recall that the $r$th forward differences of a function $f:\R\rightarrow \R$ are recursively defined as 
\begin{align*}
	\Delta^{0}f(x) = f(x), \quad\Delta^1f(x) = f(x+1)-f(x), \quad \Delta^rf(z) = \Delta^1(\Delta^{r-1}f)(x), \quad r\in \N.
\end{align*}
Equivalently, 
\begin{align}\label{19}
	\Delta^rf(x) = \sum_{j=0}^{r} \binom{r}{j}  (-1)^{r-j}  f(x+j), \qquad r\in \N_0.
\end{align}
It is well known that if $p_n(x)$ is a polynomial of degree $n$, then 
\begin{align}\label{20}
	 \Delta^{r}p_n(x) = 0, \qquad r>n.
\end{align}
The $B$-Stirling numbers of the first and second kind are related, respectively, to the derivatives and the forward differences of the potential polynomials $P_n(B,x)$. In a certain sense, \cref{theo_4} is the inverse of \cref{theo_1}.

\begin{theorem}\label{theo_4}
	Let $B(z)\in\B$ and let $r\in \N_0$, $0\leq r\leq n$. Then, 
	\begin{align}\label{21}
		\frac{P_n^{(r)}(B;x)}{r!}=\sum_{k=r}^{n}\binom{k}{r} s_B(n,k)x^{k-r} = \sum_{j=r}^{n} \binom{n}{j} s_B(j,r)P_{n-j}(B;x),
	\end{align}
and
\begin{align}\label{22}
	\frac{\Delta^r P_n(B;x)}{r!}= \sum_{k=r}^{n}\binom{k}{r}S_B(n,k)(x)_{k-r}=\sum_{j=r}^{n}\binom{n}{j} S_B(j,r)P_{n-j}(B;x). 
\end{align}
In particular, 
\begin{align}\label{23}
	\frac{P_n^{(r)}(B;0)}{r!} = s_B(n,r),\qquad \mbox{and}\qquad \frac{\Delta^rP_n(B;0)}{r!} = S_B(n,r).
\end{align}
\end{theorem}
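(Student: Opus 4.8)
The plan is to establish each of the two equalities in \eqref{21} by a separate argument, do the same for \eqref{22}, and then obtain \eqref{23} by simply evaluating at $x=0$. The left-hand equalities will come from differentiating (resp.\ differencing) the explicit expansions supplied by \cref{theo_1}, while the right-hand equalities will come from manipulating the generating function \eqref{2} together with the binomial convolution.

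For the first equality in \eqref{21} I would start from $P_n(B;x)=\sum_{k=0}^n s_B(n,k)x^k$ and differentiate $r$ times in $x$, using $\tfrac{d^r}{dx^r}x^k=\tfrac{k!}{(k-r)!}x^{k-r}$ for $k\geq r$ and $0$ otherwise; dividing by $r!$ converts the factor $\tfrac{k!}{r!(k-r)!}$ into $\binom{k}{r}$. For the first equality in \eqref{22} the analogous input is the forward-difference power rule $\Delta^r(x)_k=\tfrac{k!}{(k-r)!}(x)_{k-r}$, which I would prove by first checking $\Delta(x)_k=k(x)_{k-1}$ (factor out the common product $(x)_{k-1}$ from $(x+1)_k-(x)_k$ and simplify) and then iterating; feeding this into $P_n(B;x)=\sum_{k=0}^n S_B(n,k)(x)_k$ and dividing by $r!$ again produces the coefficients $\binom{k}{r}$.

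For the right-hand equalities I would work at the level of generating functions. Differentiating \eqref{2} $r$ times in $x$ gives $\sum_n P_n^{(r)}(B;x)\tfrac{z^n}{n!}=(\log B(z))^r (B(z))^x$, since each differentiation brings down a factor $\log B(z)$. I then write $(\log B(z))^r=r!\sum_{j\geq r}s_B(j,r)\tfrac{z^j}{j!}$ via \eqref{3} and $(B(z))^x=\sum_m P_m(B;x)\tfrac{z^m}{m!}$ via \eqref{2}, and extract the coefficient of $z^n/n!$ using the binomial convolution \eqref{11}--\eqref{12}; the leading $r!$ cancels the division by $r!$, leaving the claimed sum over $j$. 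The difference version is structurally identical: since $\Delta_x (B(z))^x=(B(z))^x(B(z)-1)$, iterating yields $\sum_n \Delta^r P_n(B;x)\tfrac{z^n}{n!}=(B(z)-1)^r(B(z))^x$, and \eqref{4} plays the role that \eqref{3} played above.

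Finally, \eqref{23} is immediate: setting $x=0$ in the left-hand equalities of \eqref{21} and \eqref{22}, every term with $k>r$ carries a factor $0^{k-r}=0$ (resp.\ $(0)_{k-r}=0$), so only the $k=r$ term survives, leaving $s_B(n,r)$ and $S_B(n,r)$ respectively. The only step requiring genuine care is the forward-difference power rule for $(x)_k$, together with checking that the term-by-term differentiation and the convolution step are legitimate; the latter are justified by the analyticity of $B(z)$ near the origin guaranteed by $B\in\B$, so the sole piece of real content is the bookkeeping identity $\Delta^r(x)_k=\tfrac{k!}{(k-r)!}(x)_{k-r}$.
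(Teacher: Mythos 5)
Your proposal is correct and follows essentially the same route as the paper: the left-hand equalities by applying $d^r/dx^r$ (resp.\ $\Delta^r/r!$, via the identity $\frac{1}{r!}\Delta^r(x)_k=\binom{k}{r}(x)_{k-r}$) to the expansions of \cref{theo_1}, the right-hand equalities by applying the same operators to the generating function \eqref{2} and invoking the binomial convolution \eqref{11}--\eqref{12}, and \eqref{23} by evaluation at $x=0$. No gaps.
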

\begin{proof}
	The first equality in \eqref{21} readily follows from \cref{theo_1}. By differentiating $r$ times with respect to $x$ in \eqref{2} we obtain
	\begin{align*}
		\sum_{n=r}^{\infty} \frac{P_n^{(r)}(B;x)}{r!}\frac{z^n}{n!} = \frac{(\log B(z))^r}{r!}B(z)^x.
	\end{align*}
Hence, the second equality in \eqref{21} follows from \eqref{11}, \eqref{12}, and \eqref{18*}. 

On the other hand, it is easily checked by induction on $r$ that 
\begin{align*}
	\frac{1}{r!} \Delta^{r}(x)_k = \binom{k}{r}(x)_{k-r}, \qquad r=0,1,\ldots,k.
\end{align*}
Thus, the first equality in \eqref{22} follows by applying the operator ${\Delta^r}/{r!}$ to the second equality in \cref{theo_1} and taking into account \eqref{20}. Applying again the operator ${\Delta^r}/{r!}$ to both sides of \eqref{2} and recalling \eqref{20}, we get
\begin{align*}
	\sum_{n=r}^{\infty} \frac{\Delta^rP_n(B;x)}{r!}\frac{z^n}{n!} = \frac{(B(z)-1)^r}{r!}B(z)^x.	
\end{align*}
Therefore, the second equality in \eqref{22} follows from \eqref{11}, \eqref{12}, and \eqref{18*}. Finally, the two identities in \eqref{23} follow from \eqref{21} and \eqref{22}, respectively.
\end{proof}
\begin{remark}\label{rem_5}
	As in \cref{rem_2}, consider $E(z) = e^z$. In this case $P_n(E;x) = x^n$ and $S_E(n,k) = S(n,k)$, as noted in \eqref{9}. Hence, \eqref{19} and the second equality in \eqref{23} give us the well known formula 
	\begin{align*}
		\frac{1}{r!} \sum_{j=0}^{r}\binom{r}{j}(-1)^{r-j}j^n = S(n,r),\qquad r\in \N_0.
	\end{align*}
\end{remark}

%%%%%%%%%%%%%%%%%%%%%%%%%%%%%%%%%%%%%%%%%%%%%%%%%%%%%%%%%%%
\section{Inner operations in the set $\B$}\label{section_3}
%%%%%%%%%%%%%%%%%%%%%%%%%%%%%%%%%%%%%%%%%%%%%%%%%%%%%%%%%%%

Given $B(z),C(z)\in \B$, we consider the inner operation in $\B$ defined as 
\begin{align}\label{24}
	B\circ C(z) = B(C(z)-1).
\end{align} 

This operation is not commutative in general. The identity element is $I(z)=1+z$, i.e., 
\begin{align}\label{25}
	B\circ I(z) = I\circ B(z) = B(z), \qquad B(z)\in \B. 
\end{align}
The behaviour of the corresponding Stirling numbers with respect to this operation is described in the following result. 
\begin{theorem}\label{theo_6}
	Let $B(z),C(z)\in \B$. Then, 
	\begin{align}\label{26}
		s_{B\circ C}(n,k) = \sum_{j=k}^{n}S_C(n,j)s_B(j,k),
	\end{align}
and 
\begin{align}\label{27}
	S_{B\circ C} (n,k) = \sum_{j=k}^{n} S_C(n,j)S_B(j,k).
\end{align}
As a consequence, 
\begin{align}\label{28}
	s_C(n,k) = \sum_{j=k}^{n}S_C(n,j)s(j,k).
\end{align}
\end{theorem}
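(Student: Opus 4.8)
The plan is to derive all three identities from a single device---the substitution $w = C(z) - 1$ inside the defining generating functions \eqref{3} and \eqref{4}---and then to read off \eqref{28} as the special case $B = I$ of \eqref{26}. Before starting I would record that $B\circ C\in\B$, since $(B\circ C)(0) = B(C(0)-1) = B(0) = 1$, so the left-hand sides below are bona fide generating functions of the required type.

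First I would establish \eqref{26}. Writing $w = C(z) - 1$, the condition $C(0) = 1$ forces $w \to 0$ as $z \to 0$, so for $|z|\leq r$ with $r$ small enough $w$ lies in the disk on which $B$ and $\log B$ are analytic (the logarithm being legitimate because $B(0) = 1$). Applying the first-kind definition \eqref{3} for $B$ at the argument $w$, and then expanding each power $w^j/j!$ by the second-kind definition \eqref{4} for $C$, I would compute
\begin{align*}
	\frac{(\log B(C(z)-1))^k}{k!} = \sum_{j=k}^{\infty} s_B(j,k)\frac{(C(z)-1)^j}{j!} = \sum_{n=k}^{\infty}\frac{z^n}{n!}\sum_{j=k}^{n} S_C(n,j)\, s_B(j,k).
\end{align*}
Since $\log(B\circ C(z)) = \log B(C(z)-1)$, the far left is precisely the series defining $s_{B\circ C}(n,k)$ via \eqref{3}, and matching coefficients of $z^n/n!$ yields \eqref{26}.

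The proof of \eqref{27} is structurally identical, starting instead from $B\circ C(z) - 1 = B(C(z)-1) - 1$ and invoking the second-kind definition \eqref{4} for $B$ at the argument $w = C(z) - 1$,
\begin{align*}
	\frac{(B(C(z)-1)-1)^k}{k!} = \sum_{j=k}^{\infty} S_B(j,k)\frac{(C(z)-1)^j}{j!},
\end{align*}
after which the same substitution via \eqref{4} for $C$ and comparison of the coefficients of $z^n/n!$ give \eqref{27}. Finally, \eqref{28} is the specialization $B = I$ of \eqref{26}: the identity property \eqref{25} gives $I\circ C = C$, so $s_{I\circ C}(n,k) = s_C(n,k)$, while \eqref{6} gives $s_I(j,k) = s(j,k)$; substituting these into \eqref{26} produces the stated formula.

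The main obstacle is analytic rather than combinatorial: one must justify interchanging the two infinite summations when the expansion of $(C(z)-1)^j/j!$ is inserted into a series already summed over $j$. This is controlled by absolute convergence on a sufficiently small disk $|z|\leq r$, available because $C(z)-1$ has no constant term and can therefore be made smaller than the radius governing the expansion of $\log B$ (respectively $B-1$) about the origin. Once this is secured, every remaining step is a formal manipulation of Cauchy products entirely parallel to the proof of \cref{theo_1}.
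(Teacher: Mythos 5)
Your proof is correct and follows essentially the same route as the paper's: expand $(\log B)^k/k!$ (resp.\ $(B-1)^k/k!$) at the argument $C(z)-1$ via \eqref{3} (resp.\ \eqref{4}), insert the expansion \eqref{4} of $(C(z)-1)^j/j!$, interchange the sums, and compare coefficients, with \eqref{28} obtained by setting $B=I$. The only difference is that you make explicit the convergence justification that the paper leaves implicit.
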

\begin{proof}
	By \eqref{3}, \eqref{4}, and \eqref{24}, we see that 
	\begin{align*}
		\frac{(\log B\circ C(z))^k}{k!}& = \sum_{j=k}^{\infty} s_B(j,k)\frac{(C(z)-1)^j}{j!} = \sum_{j=k}^{\infty}s_B(j,k)\sum_{n=j}^{\infty}S_C(n,j)\frac{z^n}{n!}\\
		&= \sum_{n=k}^{\infty}\frac{z^n}{n!}\sum_{j=k}^{n} S_C(n,j)s_B(j,k),
	\end{align*} 
thus showing \eqref{26}. Similarly, 
\begin{align*}
	\frac{(B\circ C(z)-1)^k}{k!} = \sum_{j=k}^{\infty} S_B(j,k)\frac{(C(z)-1)^j}{j!}=\sum_{n=k}^{\infty} \frac{z^n}{n!} \sum_{j=k}^{n} S_C(n,j)S_B(j,k),
\end{align*}
which shows \eqref{27}. Finally, \eqref{28} follows by choosing $B=I$ in \eqref{26} and recalling \eqref{6}. The proof is complete.
\end{proof}
\begin{remark}\label{rem_7}
	In matrix notation, \cref{theo_6} can be simply written as 
	\begin{align*}
		\mathrm{\textbf{s}}_{B\circ C} = \mathrm{\textbf{S}}_C \mathrm{\textbf{s}}_B, \quad \mathrm{\textbf{S}}_{B\circ C} = \mathrm{\textbf{S}}_C \mathrm{\textbf{S}}_B, \quad \mathrm{\textbf{s}}_C =\mathrm{\textbf{S}}_C\mathrm{\textbf{s}},
	\end{align*}
where $\mathrm{\textbf{s}}$ stands for the matrix $\mathrm{\textbf{s}} = (s(j,k))$, $j,k\in \N_0$.
\end{remark}
Another interesting inner operation in $\B$ is defined as 
\begin{align}\label{29}
	B\diamond C(z) = B(\log C(z)), \qquad B(z), C(z)\in \B. 
\end{align}
This non-commutative operation has identity element $E(z)=e^z$, that is,
\begin{align*}
	B\diamond E(z) = E\diamond B(z) = B(z), \qquad B(z)\in \B. 
\end{align*}
An analogous result to \cref{theo_6} for this inner operation is the following.
\begin{theorem}\label{theo_8}
	Let $B(z), C(z)\in \B$. Then,
	\begin{align}\label{30}
		s_{B\diamond C}(n,k) = \sum_{j=k}^{n} s_C(n,j)s_B(j,k),
	\end{align}
and 
\begin{align}\label{31}
	S_{B\diamond C}(n,k) = \sum_{j=k}^{n}s_C(n,j)S_B(j,k).
\end{align}
As a consequence,
\begin{align}\label{32}
	S_C(n,k) = \sum_{j=k}^{n} s_C(n,j)S(j,k).
\end{align}
\end{theorem}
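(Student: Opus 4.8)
The plan is to mirror exactly the proof of \cref{theo_6}, now using the operation $B\diamond C(z) = B(\log C(z))$ from \eqref{29} in place of $B\circ C(z) = B(C(z)-1)$. The essential difference between the two operations is which building block of $C$ we are substituting: in \eqref{24} we substitute $C(z)-1$ and expand using the \emph{second-kind} numbers $S_C$ via \eqref{4}, whereas in \eqref{29} we substitute $\log C(z)$ and must instead expand using the \emph{first-kind} numbers $s_C$ via \eqref{3}. This single substitution is what turns the outer factor $S_C(n,j)$ of \cref{theo_6} into $s_C(n,j)$ here.

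First I would establish \eqref{30}. Starting from the generating-function definition \eqref{3} of the first-kind numbers applied to $B\diamond C$, I would write $\log\bigl(B\diamond C(z)\bigr)=\log B(\log C(z))$, so that
\begin{align*}
	\frac{(\log B\diamond C(z))^k}{k!} = \sum_{j=k}^{\infty} s_B(j,k)\frac{(\log C(z))^j}{j!}.
\end{align*}
Here I use \eqref{3} for $B$ itself, but evaluated at the argument $\log C(z)$ rather than $z$. Now I would apply \eqref{3} once more, this time to $C$, to rewrite $(\log C(z))^j/j! = \sum_{n=j}^{\infty} s_C(n,j) z^n/n!$. Substituting and interchanging the order of summation, exactly as in the proof of \cref{theo_6}, collects the coefficient of $z^n/n!$ as $\sum_{j=k}^{n} s_C(n,j)s_B(j,k)$, which is \eqref{30}.

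Next I would prove \eqref{31} by the same method, now starting from the second-kind definition \eqref{4} for $B\diamond C$, namely $B\diamond C(z)-1 = B(\log C(z))-1$. Expanding via \eqref{4} applied to $B$ gives
\begin{align*}
	\frac{(B\diamond C(z)-1)^k}{k!} = \sum_{j=k}^{\infty} S_B(j,k)\frac{(\log C(z))^j}{j!},
\end{align*}
and then substituting the first-kind expansion of $(\log C(z))^j/j!$ from \eqref{3} for $C$ and reindexing yields \eqref{31}. Finally, \eqref{32} is the specialization $B=E$: since $E(z)=e^z$ is the identity for $\diamond$ and since \eqref{9} gives $s_E(j,k)=\delta_{j,k}$ while $S_E(j,k)=S(j,k)$, setting $B=E$ in \eqref{30} recovers $s_{E\diamond C}=s_C$ trivially, whereas setting $B=E$ in \eqref{31} gives $S_C(n,k)=\sum_{j=k}^{n} s_C(n,j)S(j,k)$, the claimed consequence. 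I do not anticipate any genuine obstacle: the only point requiring a word of care is the domain of validity of the composition, since $\log C(z)$ must lie in the disk where $B$ is analytic, but because $C(0)=1$ we have $\log C(0)=0$ and the composite $B(\log C(z))$ is analytic in a neighborhood of the origin, so the formal manipulations with convergent power series are justified for $|z|$ small.
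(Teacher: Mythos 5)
Your argument is correct and is exactly the proof the paper omits: the paper simply states that the proof is ``similar to that of \cref{theo_6}'', and your write-up carries out that analogous generating-function computation, with the specialization $B=E$ and \eqref{9} giving \eqref{32} just as the paper intends.
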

\begin{proof}
	The proof is similar to that of \cref{theo_6} and therefore we omit it. 
\end{proof}
Let $B(z)\in \B$. Formulas \eqref{28} and \eqref{32} are important for computations. In fact, dealing with particular examples, one kind of the $B$-Stirling numbers $s_B(n,k)$ or $S_B(n,k)$ is easier to compute than the other. In such a case, the strategy consists of directly computing the easiest one and then use \eqref{28} or \eqref{32} to compute the other by using the classical Stirling numbers. 

\begin{remark}\label{rem_9}
	Choosing $C=E$ in \eqref{28} and $C=I$ in \eqref{32} and recalling \eqref{6} and \eqref{9}, we obtain the well known formula
	\begin{align*}
		\delta_{nk} = \sum_{j=k}^{n}S(n,j)s(j,k) = \sum_{j=k}^{n} s(n,j)S(j,k). 
	\end{align*}
	\end{remark}
The behaviour of the potential polynomials under the two operations considered above is specified in the following result.
\begin{theorem}\label{theo_10}
	Let $B(z), C(z)\in \B$. Then,
	\begin{align}\label{33}
		P_n(B\circ C;x) = \sum_{k=0}^{n}S_C(n,k)P_k(B;x),
	\end{align}
and 
\begin{align}\label{34}
	P_n(B\diamond C; x) = \sum_{k=0}^{n}s_C(n,k)P_k(B;x).
\end{align}
\end{theorem}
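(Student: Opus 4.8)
The plan is to compare the generating functions of both sides, exploiting the fact that the defining relation \eqref{2} for potential polynomials substitutes naturally into the two inner operations \eqref{24} and \eqref{29}. In essence, each identity says that composing the generating function of $B$ with the inner argument supplied by $C$ re-expands the potential polynomials of $B$ against the $B$-Stirling numbers of $C$.

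For the first identity \eqref{33}, I would start from the generating function of the left-hand side. By \eqref{2} applied to $B\circ C$ and the definition \eqref{24},
\begin{align*}
	\sum_{n=0}^{\infty} P_n(B\circ C; x) \frac{z^n}{n!} = \left(B\circ C(z)\right)^x = \left(B(C(z)-1)\right)^x.
\end{align*}
Since $C(0)=1$, the quantity $C(z)-1$ is small for $|z|$ small, so I may apply \eqref{2} to $B$ with argument $w=C(z)-1$, obtaining
\begin{align*}
	\left(B(C(z)-1)\right)^x = \sum_{k=0}^{\infty} P_k(B;x) \frac{(C(z)-1)^k}{k!}.
\end{align*}
The key step is then to expand $(C(z)-1)^k/k!$ using the defining relation \eqref{4} of the $B$-Stirling numbers of the second kind associated to $C$, and to interchange the order of summation:
\begin{align*}
	\sum_{k=0}^{\infty} P_k(B;x) \sum_{n=k}^{\infty} S_C(n,k)\frac{z^n}{n!} = \sum_{n=0}^{\infty} \frac{z^n}{n!} \sum_{k=0}^{n} S_C(n,k) P_k(B;x).
\end{align*}
Identifying the coefficients of $z^n/n!$ yields \eqref{33}.

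For the second identity \eqref{34}, the argument is entirely analogous, replacing $\circ$ by $\diamond$. Using \eqref{2} and \eqref{29}, the left-hand side has generating function $\left(B(\log C(z))\right)^x$; since $\log C(z)$ is small for $|z|$ small (again because $C(0)=1$), I apply \eqref{2} to $B$ with $w=\log C(z)$ and then expand $(\log C(z))^k/k!$ by the defining relation \eqref{3} of the $B$-Stirling numbers of the first kind associated to $C$. Interchanging summations and reading off the coefficient of $z^n/n!$ gives \eqref{34}.

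The only delicate point is the validity of the substitution and the interchange of summations, and this is precisely where I would be careful. It is guaranteed by restricting to $|z|\leq r$ for $r>0$ sufficiently small: there $|C(z)-1|<1$ and $\log C(z)$ is analytic and small, so both double series converge absolutely and the rearrangement is justified. No genuine obstacle arises; the result is a direct consequence of composing the generating-function definitions, in the same spirit as \cref{theo_6} and \cref{theo_8}, and indeed one can read \eqref{33} and \eqref{34} as the potential-polynomial counterparts of the Stirling-number composition formulas \eqref{27} and \eqref{30}.
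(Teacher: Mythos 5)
Your proposal is correct and follows essentially the same route as the paper: expand $(B\circ C(z))^x$ (resp.\ $(B\diamond C(z))^x$) via \eqref{2} with argument $C(z)-1$ (resp.\ $\log C(z)$), substitute the defining series \eqref{4} (resp.\ \eqref{3}) for $C$, and interchange summations to read off the coefficient of $z^n/n!$. Your extra remarks on the smallness of $C(z)-1$ and $\log C(z)$ near $z=0$ only make explicit a justification the paper leaves implicit.
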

\begin{proof}
	By \eqref{2}, \eqref{4}, and \eqref{24}, we have 
	\begin{align*}
		(B\circ C(z))^{x} &= (B(C(z)-1))^x = \sum_{k=0}^{\infty} P_k(B;x)\frac{(C(z)-1)^k}{k!}\\
		&= \sum_{k=0}^{\infty} P_k(B;x) \sum_{n=k}^{\infty} S_C(n,k)\frac{z^n}{n!} = \sum_{n=0}^{\infty}\frac{z^n}{n!} \sum_{k=0}^{n}S_C(n,k)P_k(B;x),
	\end{align*}
thus showing \eqref{33}. In a similar way,
\begin{align*}
	(B\diamond C(z))^x &= (B(\log C(z)))^x = \sum_{k=0}^{\infty} P_k(B;x)\frac{(\log C(z))^k}{k!} \\
	&= \sum_{k=0}^{\infty} P_k(B;x)\sum_{n=k}^{\infty}s_C(n,k)\frac{z^n}{n!} =\sum_{n=0}^{\infty} \frac{z^n}{n!} \sum_{k=0}^{n}s_C(n,k)P_k(B;x),
\end{align*}
which shows \eqref{34} and completes the proof.
\end{proof}

%%%%%%%%%%%%%%%%%%%%%%%%%%%%%%%%%%%

\section{Examples}\label{section_4}

%%%%%%%%%%%%%%%%%%%%%%%%%%%%%%%%%%%

To illustrate the results of the previous sections, we consider the following examples.

%%%%%%%%%%%%%%%%%%%%%%%%%%%%%%%%%%%%%%%%%%%%%%%%%%%%%%%%%%%%%
\subsection{Example 1. Partial and complete Bell polynomials}
%%%%%%%%%%%%%%%%%%%%%%%%%%%%%%%%%%%%%%%%%%%%%%%%%%%%%%%%%%%%%

An important spacial case of the inner operation defined in \eqref{24} occurs when $B(z)=E(z)=e^z$. More precisely, we give the following result.

\begin{theorem}\label{theo_10*}
	Let $E\circ C(z)=\exp{(C(z)-1)}$, where $C(z)\in \B$. Then, 
	\begin{align}\label{A}
		s_{E\circ C}(n,k) = S_C(n,k), \qquad S_{E\circ C}(n,k) = \sum_{j=k}^n S_C(n,j)S(j,k).
	\end{align} 
In addition,
\begin{align}\label{B}
	P_n(E\circ C; x) = \sum_{k=0}^{n}S_C(n,k)x^k.
\end{align}
\end{theorem}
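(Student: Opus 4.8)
The plan is to recognize that $E \circ C$ is nothing but the inner operation $\circ$ of \eqref{24} applied with $B = E$, since $E \circ C(z) = E(C(z)-1) = \exp(C(z)-1)$ follows from $E(w) = e^w$ evaluated at $w = C(z)-1$. Consequently both identities in \eqref{A} will drop out of \cref{theo_6}, and \eqref{B} will drop out of \cref{theo_10}, once I feed in the explicit $E$-Stirling numbers and potential polynomials recorded in \eqref{9} and \eqref{10}, namely $s_E(j,k) = \delta_{j,k}$, $S_E(j,k) = S(j,k)$, and $P_k(E;x) = x^k$.

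First I would prove the first identity in \eqref{A}. Specializing \eqref{26} to $B = E$ gives $s_{E\circ C}(n,k) = \sum_{j=k}^n S_C(n,j)\, s_E(j,k)$; since $s_E(j,k) = \delta_{j,k}$, the only surviving term is $j = k$, leaving $s_{E\circ C}(n,k) = S_C(n,k)$. For the second identity, I would specialize \eqref{27} to $B = E$ and substitute $S_E(j,k) = S(j,k)$, which immediately gives $S_{E\circ C}(n,k) = \sum_{j=k}^n S_C(n,j)\, S(j,k)$.

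For the potential polynomial formula \eqref{B}, I would specialize \eqref{33} to $B = E$, obtaining $P_n(E\circ C;x) = \sum_{k=0}^n S_C(n,k)\, P_k(E;x)$, and then replace $P_k(E;x)$ by $x^k$ using \eqref{10}. Since the whole argument is just substitution into formulas already established in \cref{theo_6} and \cref{theo_10}, I expect no genuine technical obstacle; the only point deserving a moment's care is the initial identification of $\exp(C(z)-1)$ with the $\circ$-operation for $B = e^z$, after which everything is mechanical.
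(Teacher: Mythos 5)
Your proposal is correct and is essentially identical to the paper's own proof: both identify $\exp(C(z)-1)$ as the $\circ$-operation with $B=E$, then obtain \eqref{A} by specializing \eqref{26} and \eqref{27} with $s_E(j,k)=\delta_{j,k}$ and $S_E(j,k)=S(j,k)$ from \eqref{9}, and obtain \eqref{B} by specializing \eqref{33} and using \eqref{10}. No issues.
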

\begin{proof}
	The identities in \eqref{A} follow by choosing $B(z) = E(z)$ in \eqref{26} and \eqref{27} and taking into account \eqref{9}. Formula \eqref{B} follows by setting $B(z)= E(z)$ in \eqref{33} and applying \eqref{10}. The proof is complete. 
\end{proof}   

\cref{theo_10*} is a reformulation of the well known partial and complete Bell polynomials. Actually, recall that (see, for instance, Comtet \cite{Comtet1974}, or Wang and Wang \cite{WangWang2009}) the partial Bell polynomials are defined by 

\begin{align}\label{C}
	\frac{1}{k!} \left(\sum_{m=1}^{\infty}x_m\frac{z^m}{m!}\right)^k = \sum_{n=k}^{\infty} B_{n,k} (x_1,\ldots, x_{n-k+1})\frac{z^n}{n!},
\end{align}
whereas the complete Bell polynomials are given by 

\begin{align}\label{D}
	\exp \left(\sum_{m=1}^{\infty}x_m\frac{z^m}{m!}\right) = \sum_{n=0}^{\infty} B_n (x_1,\ldots, x_{n})\frac{z^n}{n!}.
\end{align}
Define $C(z)\in \B$ as 
\begin{align*}
	C(z) = 1+\sum_{m=1}^{\infty} x_m\frac{z^m}{m!}.
\end{align*}
Comparing the first identity in \eqref{A} with \eqref{C}, we see that 
\begin{align*}
	s_{E\circ C} (n,k) = S_C(n,k) = B_{n,k} (x_1,\ldots, x_{n-k+1}).
\end{align*}
Similarly, comparing \eqref{B} with \eqref{D}, we obtain the well known relation 
\begin{align*}
	B_n(x_1,\ldots, x_n) = P_n(E\circ C;1) = \sum_{k=0}^{n}S_C(n,k) = \sum_{k=0}^{n}B_{n,k}(x_1,\ldots, x_{n-k+1}).
\end{align*}
In other words, the partial Bell polynomials are the $E\circ C$-Stirling numbers of the first kind, whereas the complete Bell polynomials coincide with the potential polynomials associated to $E\circ C$ evaluated at $1$.

%%%%%%%%%%%%%%%%%%%%%%%%%%%%%%%%%%%%%%%%%%%%%%%%%%%
\subsection{Example 2. Degenerate Stirling numbers}
%%%%%%%%%%%%%%%%%%%%%%%%%%%%%%%%%%%%%%%%%%%%%%%%%%%

Let $\lambda\in \R\setminus\{0\}$. Denote by 

\begin{align}\label{35}
	(x)_{n,\lambda}= x(x-\lambda)\cdots (x-(n-1)\lambda) = \left(\frac{x}{\lambda}\right)_{n}\lambda^n, \quad n\in \N,\quad (x)_{0,\lambda}=1.
\end{align}

Recently, Kim and Kim \cite{KimKim2023} defined the degenerate Stirling numbers of the first and second kind as 
\begin{align}\label{36}
	(x)_n= \sum_{k=0}^{n}S_{1,\lambda}(n,k)(x)_{k,\lambda}\qquad\mbox{and}\qquad (x)_{n,\lambda} = \sum_{k=0}^{n}\sts{n}{k}_{\lambda}(x)_k,
\end{align}
respectively, and proved different results for these numbers. 

Let us describe such numbers using the tools developed above. In first place, we define the function $B_{\lambda}(z)\in \B$ as 
\begin{align}\label{37}
	B_{\lambda}(z) = (1+\lambda z)^{1/\lambda}.
\end{align}
As noted in \cite{KimKim2023}, $B_{\lambda}(z)\rightarrow E(z)=e^z$, as $\lambda\rightarrow 0$. In the following result, we compute the $B_{\lambda}$-Stirling numbers and the potential polynomials associated to $B_{\lambda}(z)$.

\begin{theorem}\label{theo_11}
	Let $B_{\lambda}(z)$ be as in \eqref{37}. Then,
	\begin{align}\label{38}
		s_{B_{\lambda}}(n,k)= \lambda^{n-k}s(n,k), 
	\end{align}
and 
\begin{align}\label{39}
	S_{B_{\lambda}}(n,k) = \sum_{j=k}^{n}\lambda^{n-j}s(n,j)S(j,k)=\frac{1}{k!}\sum_{j=0}^{k}\binom{k}{j}(-1)^{k-j}(j)_{n,k}.
\end{align}
In addition, 
\begin{align}\label{40}
	P_n(B_{\lambda};x) = (x)_{n,\lambda} = \sum_{k=0}^{n}\lambda^{n-k}s(n,k)x^{k} = \sum_{k=0}^{n}S_{B_{\lambda}}(n,k)(x)_k.
\end{align}
\end{theorem}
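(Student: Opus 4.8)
The plan is to compute the three quantities in \eqref{38}, \eqref{39}, and \eqref{40} by exploiting the explicit form $B_\lambda(z)=(1+\lambda z)^{1/\lambda}$ together with the definitions \eqref{3} and \eqref{4} and the transfer formulas from \cref{theo_6} and \cref{theo_8}. The starting observation is that $\log B_\lambda(z)=\tfrac{1}{\lambda}\log(1+\lambda z)$, so that
\begin{align*}
	\frac{(\log B_\lambda(z))^k}{k!} = \frac{1}{\lambda^k}\,\frac{(\log(1+\lambda z))^k}{k!}.
\end{align*}
Comparing with \eqref{3} applied to $I(z)=1+\lambda z$, or more directly recalling that $\tfrac{1}{k!}(\log(1+w))^k=\sum_{n\ge k}s(n,k)\tfrac{w^n}{n!}$ is the classical generating function for $s(n,k)$ (this is \eqref{3}--\eqref{6} with the argument $w=\lambda z$), I read off the coefficient of $z^n/n!$. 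Substituting $w=\lambda z$ produces a factor $\lambda^n$ from $w^n$ and the prefactor contributes $\lambda^{-k}$, so the coefficient is $\lambda^{n-k}s(n,k)$, which is exactly \eqref{38}.

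For \eqref{39}, the cleanest route is not to expand $(B_\lambda(z)-1)^k$ directly but to invoke formula \eqref{32} from \cref{theo_8}, namely $S_C(n,k)=\sum_{j=k}^n s_C(n,j)S(j,k)$, with $C=B_\lambda$. Feeding in the value $s_{B_\lambda}(n,j)=\lambda^{n-j}s(n,j)$ just established yields immediately the first equality
\begin{align*}
	S_{B_\lambda}(n,k)=\sum_{j=k}^n \lambda^{n-j}s(n,j)S(j,k).
\end{align*}
This is the computational strategy advertised in \cref{section_3}: compute the easy kind $s_{B_\lambda}$ first, then pass to $S_{B_\lambda}$ via the classical Stirling numbers. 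The second equality in \eqref{39}, the finite-difference expression $\tfrac{1}{k!}\sum_{j}\binom{k}{j}(-1)^{k-j}(j)_{n,\lambda}$ (I expect the intended symbol is $(j)_{n,\lambda}$ rather than $(j)_{n,k}$), should follow from the second identity in \eqref{23} of \cref{theo_4}, which states $S_B(n,r)=\Delta^r P_n(B;0)/r!$; applying \eqref{19} to expand $\Delta^k$ and using $P_n(B_\lambda;x)=(x)_{n,\lambda}$ from \eqref{40} turns the forward difference at $0$ into exactly that alternating sum.

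Finally \eqref{40} has three asserted equalities. The identification $P_n(B_\lambda;x)=(x)_{n,\lambda}$ comes straight from the generating function \eqref{2}: since $B_\lambda(z)^x=(1+\lambda z)^{x/\lambda}$, the binomial series in $z$ gives $\sum_n (x/\lambda)_n \lambda^n z^n/n!$, and by the rescaling identity \eqref{35}, $(x/\lambda)_n\lambda^n=(x)_{n,\lambda}$, so $P_n(B_\lambda;x)=(x)_{n,\lambda}$. The remaining two equalities are then just \cref{theo_1} specialized to $B=B_\lambda$, using the already-computed $s_{B_\lambda}(n,k)=\lambda^{n-k}s(n,k)$ for the monomial expansion and the definition of $S_{B_\lambda}$ for the falling-factorial expansion; the latter also recovers Kim--Kim's definition \eqref{36} of the degenerate Stirling numbers of the second kind. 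I anticipate the only genuine friction is bookkeeping the $\lambda$-powers correctly in the $w=\lambda z$ substitution for \eqref{38} and confirming the index/notation in the second half of \eqref{39}; everything else is a direct specialization of the general machinery already proved.
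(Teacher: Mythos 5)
Your proposal is correct and follows essentially the same route as the paper: the substitution $w=\lambda z$ in the classical generating function for $s(n,k)$ gives \eqref{38}, formula \eqref{32} gives the first equality in \eqref{39}, the binomial expansion of $(1+\lambda z)^{x/\lambda}$ together with \cref{theo_1} gives \eqref{40}, and the forward-difference identity \eqref{23} gives the second equality in \eqref{39}. Your remark that $(j)_{n,k}$ in \eqref{39} should read $(j)_{n,\lambda}$ is also right; that is a typo in the statement.
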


\begin{proof}
	From \eqref{37}, we directly compute $s_{B_{\lambda}}(n,k)$ as follows
	\begin{align*}
		\frac{(\log B_{\lambda}(z))^{k}}{k!}=\frac{1}{\lambda^k}\frac{(\log(1+\lambda z))^k}{k!} = \frac{1}{\lambda^k}\sum_{n=k}^{\infty} s(n,k) \frac{(\lambda z)^n}{n!},
	\end{align*}
which, by \eqref{3}, shows \eqref{38}. The first equality in \eqref{39} directly follows from \eqref{32} and \eqref{38}. 

On the other hand, we have from \eqref{35}, \eqref{37}, and the binomial expansion 
\begin{align*}
	(B_{\lambda}(z))^x = (1+\lambda z)^{x/\lambda}=\sum_{n=0}^{\infty} \left(\frac{x}{\lambda}\right)_n\lambda^n\frac{z^n}{n!}= \sum_{n=0}^{\infty} (x)_{n,\lambda}\frac{z^n}{n!},
\end{align*}
thus showing the first equality in \eqref{40}. The other two equalities in \eqref{40} follow from \cref{theo_1} and \eqref{38}.
Finally, we get from \eqref{19}, the second identity in \eqref{23}, and the first equality in \eqref{40}

\begin{align*}
	S_{B_{\lambda}}(n,k) = \frac{1}{k!} \Delta^k P_{n}(B_{\lambda};0) = \frac{1}{k!} \sum_{j=0}^{k} \binom{k}{j} (-1)^{k-j}(j)_{n,k}.
\end{align*}
This shows the second equality in \eqref{39} and completes the proof.
\end{proof}

Comparing the second identity in \eqref{36} with \eqref{40}, we see that 

\begin{align*}
	\sts{n}{k}_{\lambda} = S_{B_{\lambda}}(n,k).
\end{align*}
In other words, the degenerate Stirling numbers of the second kind coincide with the $B_{\lambda}$-Stirling numbers of the second kind. 

Define the function $C_{\lambda}(z)\in \B $ as
\begin{align}\label{41}
	C_{\lambda}(z) = \exp\left(\frac{1}{\lambda}((1+z)^{\lambda}-1)\right).
\end{align}
Note that $C_{\lambda}(z)\rightarrow I(z)=1+z$, as $\lambda\rightarrow 0$. The $C_{\lambda}$-Stirling numbers, as well as the potential polynomials $P_n(C_{\lambda};x)$ are not easy to compute in closed form. However, it is easily checked that the functions $B_{\lambda}(z)$ and $C_{\lambda}(z)$ satisfy the following relations

\begin{align}\label{42}
	C_{\lambda} \circ B_{\lambda}(z)= E(z) = e^z\qquad\mbox{and}\qquad 	B_{\lambda} \diamond C_{\lambda}(z)= I(z) = 1+z.
\end{align}

This allows us to relate the $C_{\lambda}$-Stirling numbers and the potential polynomials $P_n(C_{\lambda};x)$ with those associated to $B_{\lambda}(z)$.
We restrict our attention to the numbers $s_{C_{\lambda}}(n,k)$. 

\begin{theorem}\label{theo_12}
	Let $B_{\lambda}(z)$ and $C_{\lambda}(z)$ be as in \eqref{37} and \eqref{41}, respectively. Then, 
	\begin{align}\label{43}
		\delta_{n,k} = \sum_{j=k}^{n}S_{B_{\lambda}}(n,j)s_{C_{\lambda}}(j,k),
	\end{align}
\begin{align}\label{44}
	s(n,k) = \sum_{j=k}^{n} s_{C_{\lambda}}(n,j)\lambda^{j-k} s(j,k),
\end{align}
and 
\begin{align}\label{45}
	(x)_n= \sum_{k=0}^{n}s_{C_{\lambda}}(n,k) (x)_{k,\lambda}.
\end{align}
\end{theorem}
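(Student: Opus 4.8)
The plan is to obtain all three identities as one-line specializations of the composition laws in \cref{theo_6}, \cref{theo_8}, and \cref{theo_10}, feeding in the two relations \eqref{42}. So the only genuine preliminary is to confirm \eqref{42}. For the first, $C_\lambda\circ B_\lambda(z)=C_\lambda(B_\lambda(z)-1)=\exp\bigl(\tfrac{1}{\lambda}((B_\lambda(z))^\lambda-1)\bigr)$, and since $(B_\lambda(z))^\lambda=1+\lambda z$ this collapses to $e^z=E(z)$. For the second, $\log C_\lambda(z)=\tfrac{1}{\lambda}((1+z)^\lambda-1)$, so $B_\lambda\diamond C_\lambda(z)=B_\lambda(\log C_\lambda(z))=(1+(1+z)^\lambda-1)^{1/\lambda}=1+z=I(z)$. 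With these two identifications everything else is bookkeeping against the already-proved theorems.

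For \eqref{43} I would apply \eqref{26} of \cref{theo_6} in the order $B=C_\lambda$, $C=B_\lambda$, which gives $s_{C_\lambda\circ C_\lambda}$—more precisely $s_{C_\lambda\circ B_\lambda}(n,k)=\sum_{j=k}^{n}S_{B_\lambda}(n,j)\,s_{C_\lambda}(j,k)$. The left side equals $s_E(n,k)=\delta_{n,k}$ by the first relation in \eqref{42} and \eqref{9}, and this is precisely \eqref{43}. For \eqref{44} I would instead invoke \eqref{30} of \cref{theo_8} with $B=B_\lambda$, $C=C_\lambda$, obtaining $s_{B_\lambda\diamond C_\lambda}(n,k)=\sum_{j=k}^{n}s_{C_\lambda}(n,j)\,s_{B_\lambda}(j,k)$; here the left side is $s_I(n,k)=s(n,k)$ by the second relation in \eqref{42} and \eqref{6}, and substituting $s_{B_\lambda}(j,k)=\lambda^{j-k}s(j,k)$ from \eqref{38} into the summands produces \eqref{44}.

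For \eqref{45} I would appeal to \eqref{34} of \cref{theo_10}, again with $B=B_\lambda$, $C=C_\lambda$, giving $P_n(B_\lambda\diamond C_\lambda;x)=\sum_{k=0}^{n}s_{C_\lambda}(n,k)P_k(B_\lambda;x)$; using $B_\lambda\diamond C_\lambda=I$ with \eqref{7} turns the left side into $(x)_n$, while $P_k(B_\lambda;x)=(x)_{k,\lambda}$ from \eqref{40} rewrites the right side, yielding \eqref{45}. The one point that truly requires care is keeping the noncommutative compositions aligned with the correct theorem: \eqref{43} must use $\circ$ in the order $C_\lambda\circ B_\lambda$ so as to hit the identity $E$ for $\circ$, whereas \eqref{44} and \eqref{45} must use $\diamond$ in the order $B_\lambda\diamond C_\lambda$ so as to hit the identity $I$ for $\diamond$; reversing either order would collapse onto the wrong identity element and the identities would fail. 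Beyond matching each composition to its theorem, I anticipate no substantive obstacle, since each of the three statements is an immediate corollary of a composition law proved earlier in the paper.
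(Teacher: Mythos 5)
Your proof is correct and takes essentially the same route as the paper: it specializes \eqref{26}, \eqref{30}, and \eqref{34} via the two relations in \eqref{42}, together with \eqref{6}, \eqref{7}, \eqref{9}, \eqref{38}, and \eqref{40}, with the composition orders chosen exactly as the paper does. The only addition is your explicit verification of \eqref{42}, which the paper asserts as ``easily checked''; your check is correct.
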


\begin{proof}
	Formula \eqref{43} follows from the first identity in \eqref{42}, by applying \eqref{9} and \eqref{26}. Formulas \eqref{44} and \eqref{45} follow from the second identity in \eqref{42}. Specifically, \eqref{44} follows from \eqref{6}, \eqref{30}, and \eqref{38}. To show \eqref{45}, we have from \eqref{7}, \eqref{34}, and the first equality in \eqref{40}
	
	\begin{align*}
		(x)_n = P_n(I;x) = P_n(B_{\lambda}\diamond C_{\lambda}; x) = \sum_{k=0}^{n}s_{C_{\lambda}}(n,k) P_{k}(B_{\lambda}; x)= \sum_{k=0}^{n}s_{C_{\lambda}}(n,k)(x)_{k,\lambda}.
	\end{align*}
This concludes the proof. 
\end{proof}

Comparing \eqref{36} with \eqref{45}, we see that 
\begin{align*}
 S_{1,\lambda} (n,k) = s_{C_{\lambda}}(n,k),
\end{align*}
that is, the degenerate Stirling numbers of the first kind coincide with the $C_{\lambda}$-Stirling numbers of the first kind. 

%%%%%%%%%%%%%%%%%%%%%%%%%%%%%%%%%%%%%%%%%%%%%%%%%%%%%%
\subsection{Example 3. Probabilistic Stirling numbers}
%%%%%%%%%%%%%%%%%%%%%%%%%%%%%%%%%%%%%%%%%%%%%%%%%%%%%%

Suppose that $B(z)\in \B$ is given by 
\begin{align}\label{46}
	B(z) = \E e^{zY},
\end{align}
where $\E$ stands for mathematical expectation and $Y$ is a complex-valued random variable having a finite moment generating function in a neighbourhood of the origin, i.e., 
\begin{align*}
	\E e^{r|Y|}<\infty, \quad \mbox{for some} \quad 0<r< \infty. 
\end{align*}
The authors introduced in \cite{AdellBenyi2024} (see also \cite{AdellLekuona2019}) the probabilistic Stirling numbers of the first and second kind, respectively denoted by $s_Y(n,k)$ and $S_Y(n,k)$, associated to $Y$ and defined as 
\begin{align}\label{47}
	s_Y(n,k) = (-1)^{n-k}s_B(n,k)\qquad \mbox{and}\qquad  S_Y(n,k)=S_B(n,k).
\end{align}
Explicit expressions of such numbers for different choices of $Y$, as well as applications in analytic number theory and probability theory can be found in \cite{Adell2022,AdellLekuona2019, AdellLekuona2021, LuoKimKim}.

Here, we focus our attention on the probabilistic interpretation of certain formulas given in the previous sections, whenever $B(z)$ has the form given in \eqref{46}. To this end, let $(Y_k)_{k\geq 1}$ be a sequence of independent copies of the random variable $Y$ and set 

\begin{align}\label{48}
	W_m = Y_1+\cdots +Y_m, \quad m\in \N\qquad (W_0=0).
\end{align}

In first place, the values of the potential polynomials $P_n(B;x)$ at nonnegative integers are in fact the moments of $W_m$. Specifically,
\begin{align}\label{49}
	P_n(B;m) = \E W_m^n.
\end{align}
Actually, it follows from \eqref{46} and \eqref{48} that 
\begin{align*}
	(B(z))^m = \left(\E e^{zY}\right)^m = \E e^{zW_m} = \sum_{n=0}^{\infty} \E W_m^n \frac{z^n}{n!},
\end{align*}
which, by virtue of \eqref{2}, shows \eqref{49}. 

In second place, \cref{theo_1}, \eqref{47}, and \eqref{49} give us 

\begin{align*}
	P_n(B;m) = \E W_m^n= \sum_{k=0}^{n}S_Y(n,k)(m)_k = \sum_{k=0}^{n}(-1)^{n-k}s_Y(n,k)m^k.
\end{align*}
In other words, the moments of $W_m$ can be computed in terms of the probabilistic Stirling numbers. Viceversa, we have from \eqref{19}, the second identity in \eqref{23}, and \eqref{49}

\begin{align*}
	S_Y(n,r) &= \frac{1}{r!}\Delta^r P_n(B;0) = \frac{1}{r!} \sum_{j=0}^{r} \binom{r}{j} (-1)^{r-j} P_n(B;j) \\
	&= \frac{1}{r!} \sum_{j=0}^{r} \binom{r}{j} (-1)^{r-j} \E W_j^n.
\end{align*}
This means that the probabilistic Stirling numbers of the second kind can be computed in terms of the moments of the random variables $W_j$.

%%%%%%%%%%%%%%%%%%%%%%%%%%%%%%%%%%%%%%%%%%%%%%%%%%%%%%%%%%%%%%%%%%%%%%%%%%%%%%%%%%%%
\subsection{Example 4. $S$-restricted Stirling numbers}
%%%%%%%%%%%%%%%%%%%%%%%%%%%%%%%%%%%%%%%%%%%%%%%%%%%%%%%%%%%%%%%%%%%%%%%%%%%%%%%%%%%%

\emph{$S$-restricted Stirling numbers of the first and second kind} (denoted by $\st{n}{k}_S$ and $\sts{n}{k}_S$, respectively) were introduced and studied in \cite{BenyiRamirez2019} and \cite{BenyiRamirezMendezWakhare2019}. Given a set $S\subseteq \N$, $\st{n}{k}_S$ is the number of permutations of $[n]=\{1,2,\ldots, n\}$ with $k$ cycles such that the size of each cycle is contained in $S$, whereas $\sts{n}{k}_S$ gives the number of partitions of the set $[n]$ having exactly $k$ blocks with the additional restriction that the size of each block is contained in $S$. 
If $S=\{k_1,k_2,\ldots\}$, the exponential generating functions are given by
\begin{align*}
\sum_{n=k}^{\infty} \st{n}{k}_S \frac{z^n}{n!} = 
\frac{1}{k!} \left(\sum_{i\geq 1} \frac{z^{k_i}}{k_i}\right)^k\qquad \mbox{and}\qquad	\sum_{n=k}^{\infty} \sts{n}{k}_S \frac{z^n}{n!} = 
	\frac{1}{k!} \left(\sum_{i\geq 1} \frac{z^{k_i}}{k_i!}\right)^k.
\end{align*}

Clearly, $S$-restricted Stirling numbers of the second kind are special cases of $B$-Stirling numbers of the second kind with 
\begin{align*}
	B^{(S)}(z) =  1+ \sum_{i\geq 1} \frac{z^{k_i}}{k_i!}.
	\end{align*}
For instance, by choosing $B(z)= \cosh z$, we obtain by the $B$-Stirling numbers of the second kind the number of set partitions of $[n]$ into $k$ non-empty blocks with \emph{even} sizes. 

The classes of set partitions with the restriction that a block can have \emph{at most} or \emph{at least} a certain number of elements have drawn the attention of many researchers. The counting sequences, the so called \emph{restricted Stirling numbers} and \emph{associated Stirling numbers} are well studied in the combinatorial literature (see, for instance, \cite{BenyiRamirez2019} for references).

Let us define the functions
\begin{align*}
	R^{(\leq m)}(z)= 1+\sum_{i=1}^m\frac{z^i}{i!} \qquad\mbox{and}\qquad R^{(\geq m)}(z)= 1+\sum_{i=m}^{\infty}\frac{z^i}{i!}.
\end{align*}
 Then $R^{(\leq m)}$-Stirling numbers coincide with the restricted Stirling numbers and  $R^{(\geq m)}$-Stirling numbers with the associated Stirling numbers (for further details, see \cite{BenyiRamirez2019}).

Similarly, let us define
\begin{align*}
	P^{(\leq m)}(z)= 1+\sum_{i=1}^m\frac{z^i}{i} \qquad\mbox{and}\qquad P^{(\geq m)}(z)= 1+\sum_{i=m}^{\infty}\frac{z^i}{i}.
\end{align*}
Then, $P^{(\leq m)}$-Stirling numbers of the second kind count permutations of $[n]$ into $k$ cycles such that each cycle is of length \emph{at most} $m$ and $P^{(\geq m)}$-Stirling numbers those with cycle-length \emph{at least} $m$. In particular, derangements (permutations without fix points) are counted by $P^{(\geq 2)}$-Stirling numbers of the second kind, the number of involutions is given by $B$-Stirling numbers of the second kind with  $B(z)= 1+z+z^2/2$ and $k$-pairings are enumerated by $B$-Stirling numbers with $B(z)= 1+z^2/2$. For further details and special cases see \cite{BenyiRamirezMendezWakhare2019} and \cite{Flajolet2009}.

Setting $B(z)=1/(1-z)$, $B$-Stirling numbers of the second kind coincide with Lah numbers. Lah numbers enumerate partitions of a set with $n$ elements into $k$ non-empty ordered lists. An introductory study of $S$-restricted Lah numbers, which are defined in the same spirit as the above mentioned $S$-restricted Stirling numbers, is given in \cite{BenyiRamirezMendez2020}. 

\section*{Acknowledgements} The first author is supported by Research Project DGA (E48$\_$23R).

\end{document}